\documentclass{amsart}
\addtolength{\textwidth}{1,5cm}
\addtolength{\hoffset}{-1cm}
\usepackage{amssymb,latexsym}
\newcommand{\PP}{{\bf P}}
\theoremstyle{plain}

\newtheorem{corollary}{Corollary}
\newtheorem{proposition}{Proposition}

\theoremstyle{definition}

\begin{document}
\title[On the rank of a symmetric form]{On the rank of a symmetric form}
\author[Kristian Ranestad]{Kristian Ranestad}
\address{Matematisk institutt\\
         Universitetet i Oslo\\
         PO Box 1053, Blindern\\
         NO-0316 Oslo\\
         Norway}
\email{ranestad@math.uio.no}
\urladdr{http://folk.uio.no/ranestad/}
\thanks{Both authors were supported by Institut
Mittag-Leffler, and thank Jaroslaw Buczynski for clearifying 
    discussions on various notions of rank.}
\author[Frank-Olaf Schreyer]{Frank-Olaf Schreyer}
\address{Mathematik und Informatik\\
Universit\"at des Saarlandes\\
 Campus E2 4\\
 D-66123 Saarbr\"ucken, Germany}
\email{schreyer@math.uni-sb.de } 
\urladdr{http://www.math.uni-sb.de/ag/schreyer/}

\date{\today}

\subjclass{13P,14Xxx}
\keywords{Symmetric tensors}
\date{\today}
\begin{abstract}
We give a lower bound for the degree of a finite apolar subscheme of a symmetric form $F$, in 
terms of the degrees of the generators of the annihilator ideal 
$F^{\bot}$.  In the special case, when $F$ is a monomial 
$x_{0}^{d_{0}}\cdot x_{2}^{d_{2}}\cdot\ldots\cdot x_{n}^{d_{n}}$ with 
$d_{0}\leq d_{1}\leq\ldots\leq d_{n-1}\leq d_{n}$ we 
deduce that the minimal length of an apolar subscheme of $F$ is $(d_{0}+1)\cdot\ldots\cdot 
(d_{n-1}+1)$, and if $d_{0}=\ldots=d_{n}$, then this minimal length 
coincides with the rank of $F$.
\end{abstract}

\maketitle


Let $F\in T= K[x_0,\ldots ,x_n]$ be a homogeneous form and 
let $S= K[y_0,\ldots ,y_n]$ be the ring of commuting differential 
operators acting on $T$. The action is called apolarity, and defines 
$S$ as a natural coordinate ring on the projective space $\PP(T_{1})$ 
of $1$-dimensional subspaces of $T_{1}$.
 The annihilator of $F$ is an ideal $F^{\bot}\subset 
S$. A finite subscheme $\Gamma\subset \PP(T_{1})$ is apolar to $F$ if 
the homogeneous ideal $I_{\Gamma}\subset S$ is contained in 
$F^{\bot}$.  

We define the cactus rank $cr(F)$ as
$$cr(F)={\rm min}\{{\rm deg}\Gamma | \Gamma\subset \PP(T_{1}), 
{\rm dim}\Gamma=0, I_{\Gamma}\subset F^{\bot}\},$$ 
the smoothable rank $sr(F)$  as
$$sr(F)={\rm min}\{{\rm deg}\Gamma | \Gamma\subset \PP(T_{1}) 
\;\;{\rm smoothable}, 
{\rm dim}\Gamma=0, I_{\Gamma}\subset F^{\bot}\}$$ 
and the rank $r(F)$ as
$$r(F)={\rm min}\{{\rm deg}\Gamma | \Gamma\subset \PP(T_{1}) \;\;{\rm smooth}, 
{\rm dim}\Gamma=0, I_{\Gamma}\subset F^{\bot}\}.$$
Clearly $cr(F)\leq sr(F)\leq r(F)$.
We shall give lower bounds for these ranks in terms of the generators 
of the ideal $F^{\bot}$. The related notion of 
border rank, $br(F),$ is defined as the minimal $k$ such that $[F]$ lies in 
the Zariski closure of the set of forms of rank $k$ in $\PP(T_{{\rm 
deg}F})$.  
  In general $br(F)\leq sr(F)$, and strict inequality occurs, so our lower bounds for $sr(F)$ does 
not apply unconditionally to $br(F)$.  
For applications of these notions of rank to powersum decompositions of symmetric forms and to equations of secant 
varieties, see \cite{RS},  \cite{LT} and \cite{BB}.

We define the degree of $F^{\bot}$ to be the length of the quotient 
algebra $S_{F}=S/F^{\bot}$.  

\begin{proposition} If the ideal of $F^{\bot}$ is generated in degree 
    $d$ and  $\Gamma\subset \PP(T_{1})$ is a finite apolar subscheme 
    to $F$, then $${\rm deg}\; \Gamma\geq \frac{1}{d}\;{\rm 
    deg}F^{\bot}.$$
    \end{proposition}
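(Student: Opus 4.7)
The plan is to use the apolarity inclusion $I_\Gamma \subset F^\perp$ to factor the surjection $S \twoheadrightarrow S_F$ through the homogeneous coordinate ring $A := S/I_\Gamma$. Because $\Gamma$ is zero-dimensional, $A$ is one-dimensional Cohen--Macaulay with Hilbert series $H_A(t) = P(t)/(1-t)$ for a polynomial $P$ satisfying $P(1) = \deg \Gamma$. The inclusion presents $S_F$ as $A/J$, where $J$ is the image of $F^\perp$ in $A$, generated in degrees $\leq d$ and with the quotient $A/J$ Artinian.

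The main step---and the one I expect to be the principal obstacle---is producing a homogeneous non-zero-divisor $h \in J$ of degree exactly $d$. Since $A/J$ is Artinian, $J$ is contained in no minimal prime of $A$; each such minimal prime $\mathfrak p_i$ corresponds to a reduced point in the support of $\Gamma$, and $A/\mathfrak p_i$ is a polynomial ring in one variable, so $A_e \not\subset \mathfrak p_i$ for every $e \geq 0$. Combining this with the degree bound on the generators of $J$, one checks that $J_d \not\subset \mathfrak p_i$ for every $i$, and prime avoidance in the finite-dimensional $K$-vector space $J_d$ (using that $K$ is infinite) then yields the required $h \in J_d$ outside $\bigcup_i \mathfrak p_i$.

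Once $h$ is in hand, the short exact sequence $0 \to A(-d) \xrightarrow{\cdot h} A \to A/(h) \to 0$ gives
\[
H_{A/(h)}(t) = (1-t^d) H_A(t) = P(t)(1+t+\cdots+t^{d-1}),
\]
so $\dim_K A/(h) = d \cdot P(1) = d \cdot \deg \Gamma$. Since $(h) \subset J$, the Artinian ring $S_F = A/J$ is a quotient of $A/(h)$, whence $\deg F^\perp = \dim_K S_F \leq d \cdot \deg \Gamma$, which is the desired inequality. After the non-zero-divisor $h$ of the correct degree is secured, the Hilbert-series computation above is routine.
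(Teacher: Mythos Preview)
Your argument is correct and is, in essence, an algebraic unpacking of the paper's geometric proof. The paper passes to cones over $\Gamma$ and over $\operatorname{Spec} S_F$, obtaining projective schemes $Y \supset X$ with $\dim Y = \dim X + 1$; it then picks a degree-$d$ hypersurface $G=\{g=0\}$ with $g\in F^{\perp}$ missing every component of $Y$, and invokes B\'ezout to get $d\cdot \deg \Gamma = \deg G \cdot \deg Y \geq \deg X = \deg F^{\perp}$. Your non-zero-divisor $h\in J_d$ is exactly this $g$ (being a non-zero-divisor on $A=S/I_\Gamma$ is the same as $\{h=0\}$ containing no component of the cone over $\Gamma$), and your Hilbert-series identity $\dim_K A/(h)=d\cdot \deg\Gamma$ together with the surjection $A/(h)\twoheadrightarrow S_F$ is precisely the B\'ezout inequality, rederived by hand. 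What your write-up buys is that it makes the hypotheses explicit: you use that $I_\Gamma$ is saturated so that $A$ is Cohen--Macaulay of depth~$1$ (ensuring the associated primes of $A$ are only the minimal ones), and you flag that the graded prime-avoidance step needs $K$ infinite---both points the paper leaves implicit.
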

    \begin{proof}
	Taking cones, we may assume that $F^{\bot}$ and $I_{\Gamma}$ 
	define subschemes $X$ and $Y$ of pure dimension $r$ and $r+1$ in $\PP^{N}$.
	Furthermore ${\rm deg}\; Y={\rm deg}\; \Gamma$ and ${\rm deg}\; 
	X={\rm deg}\; F^{\bot}$.
	The apolarity condition says that $I_{X}\supset I_{Y}$, i.e. 
	that $X\subset Y$ as schemes.  Now, take an element $g$ in 
	$I_{X}$ that does not contain any component of $Y$.  Then the 
	hypersurface $G=\{g=0\}$ has proper intersection with $Y$ and 
	contains $X$.  Therefore, by Bezout, 
	$${\rm deg}G\cdot {\rm deg}Y\geq {\rm deg} X.$$
	The proposition follows by taking $g$ of degree $d$.
	\end{proof}
	
	\begin{corollary} If the ideal of $F^{\bot}$ is generated in degree 
    $d$, then the cactus rank $$cr(F)\geq \frac{1}{d}\;{\rm 
    deg}F^{\bot}.$$
    \end{corollary}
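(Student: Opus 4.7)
The plan is to read off the corollary directly from the preceding Proposition together with the definition of cactus rank. Recall that
\[
cr(F) = \min\{\deg \Gamma \mid \Gamma \subset \mathbf{P}(T_1) \text{ a finite apolar subscheme of } F\},
\]
and the minimum is attained because degrees of zero-dimensional subschemes are positive integers and the set is nonempty.

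First I would fix a finite apolar subscheme $\Gamma_0 \subset \mathbf{P}(T_1)$ realizing the minimum, so that $\deg \Gamma_0 = cr(F)$. Then I would apply the Proposition to $\Gamma_0$, which is legitimate since $\Gamma_0$ satisfies all of its hypotheses ($F^{\bot}$ is generated in degree $d$ by assumption, and $I_{\Gamma_0} \subset F^{\bot}$ by apolarity). This gives the chain
\[
cr(F) \;=\; \deg \Gamma_0 \;\geq\; \tfrac{1}{d}\, \deg F^{\bot},
\]
which is exactly the inequality claimed.

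There is no genuine obstacle: the corollary is essentially a tautological specialization of the Proposition to the minimizing scheme. The only conceptual point worth flagging in the write-up is that the lower bound of the Proposition is uniform over all finite apolar $\Gamma$, so it survives passage to the minimum; and that $cr(F)$ is finite (for any $F$ the Gorenstein quotient $S/F^{\bot}$ itself provides an apolar subscheme after taking $\operatorname{Proj}$), so the minimum exists and the statement is not vacuous.
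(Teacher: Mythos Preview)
Your argument is correct and is exactly the deduction the paper intends: the Proposition bounds $\deg\Gamma$ uniformly over all finite apolar $\Gamma$, so the bound passes to the minimum, which is $cr(F)$. One small correction to your parenthetical remark: $S/F^{\bot}$ is Artinian, so its $\operatorname{Proj}$ is empty and cannot supply an apolar subscheme; finiteness of $cr(F)$ follows instead from the fact that $F^{\bot}$ contains a complete intersection of $n$ forms (e.g.\ via Macaulay's theorem or the apolarity lemma), which cuts out a genuine zero-dimensional apolar subscheme of $\mathbf{P}(T_1)$.
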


	\begin{corollary} If $F$ is a monomial, $F=x_{0}^{d_{0}}\cdot 
	    x_{1}^{d_{1}}\cdot\ldots\cdot x_{n}^{d_{n}}$ with 
$d_{0}\leq d_{1}\leq\ldots\leq d_{n}$, then  the cactus rank and the 
smoothable rank coincide and equals $$cr(F)=sr(F)=(d_{0}+1)\cdot\ldots\cdot 
(d_{n-1}+1).$$  If furthermore $d_{n}=d_{0}=d$, i.e. $F=(x_{0}\cdot 
x_{1}\cdot\ldots\cdot x_{n})^{d}$, then 
$r(F)=cr(F)=sr(F)=(d+1)^{n}$.\end{corollary}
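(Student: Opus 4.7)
The plan is to combine Corollary 2 with explicit constructions of apolar subschemes that realize the lower bound. For a monomial $F = x_0^{d_0}\cdots x_n^{d_n}$, a direct calculation gives the complete intersection
$$F^\perp = (y_0^{d_0+1},\ldots,y_n^{d_n+1}),$$
so $\deg F^\perp = \prod_{i=0}^n(d_i+1)$ and the maximal generator degree is $d_n+1$. Applying Corollary 2 with $d = d_n+1$ yields
$$cr(F) \;\geq\; \frac{1}{d_n+1}\prod_{i=0}^n(d_i+1) \;=\; \prod_{i=0}^{n-1}(d_i+1).$$

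For the matching upper bound on $sr(F)$ I would take the zero-dimensional subscheme $\Gamma\subset\PP(T_1)$ supported at $[0:\cdots:0:1]$ whose homogeneous ideal is $J=(y_0^{d_0+1},\ldots,y_{n-1}^{d_{n-1}+1})\subset F^\perp$. Its length is $\prod_{i<n}(d_i+1)$, and in the affine chart $y_n=1$ its coordinate ring is the tensor product $\bigotimes_{i=0}^{n-1}K[y_i]/(y_i^{d_i+1})$ of fat points on lines. Deforming each factor via the family $K[y_i,t]/\prod_{j=1}^{d_i+1}(y_i-jt)$ to $d_i+1$ distinct points on the $y_i$-axis and taking the fibred product over the parameter produces a flat family in $\PP^n$ with special fibre $\Gamma$ and reduced general fibre. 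Hence $\Gamma$ is smoothable, $sr(F)\leq\prod_{i<n}(d_i+1)$, and together with Corollary 2 this gives the first equality.

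For the case $d_0=\cdots=d_n=d$, I would exhibit a smooth apolar subscheme of length $(d+1)^n$. Let $\zeta\in K$ be a primitive $(d+1)$-th root of unity and set
$$\Gamma' = \{[1:\zeta^{a_1}:\cdots:\zeta^{a_n}] : (a_1,\ldots,a_n)\in\{0,\ldots,d\}^n\}.$$
The key identity $\prod_{j=0}^{d}(y_i-\zeta^j y_0) = y_i^{d+1}-y_0^{d+1}$ shows that the $n$ forms $y_i^{d+1}-y_0^{d+1}\in F^\perp$, $i=1,\ldots,n$, vanish precisely on $\Gamma'$. Since $|\Gamma'|=(d+1)^n$ equals the expected complete-intersection degree and all points are reduced, these forms already generate the radical ideal of $\Gamma'$, so $I_{\Gamma'}\subset F^\perp$. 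Therefore $r(F)\leq(d+1)^n$, and combined with the previous paragraph all three ranks agree.

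The step I expect to require the most care is the smoothability claim in the general case: one must verify that the tensor product of the one-parameter deformations of the fat factors is flat over the parameter with reduced general fibre, and that the resulting affine family extends to a flat family of subschemes of $\PP^n$. Both points are formal (flatness of tensor products of flat modules, plus extension across the hyperplane at infinity), but they constitute the only non-combinatorial ingredient beyond Proposition 1; everything else is accounting for degrees and exhibiting the explicit ideals above.
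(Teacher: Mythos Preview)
Your argument is correct and follows the same overall architecture as the paper: compute $F^{\perp}=(y_0^{d_0+1},\ldots,y_n^{d_n+1})$, apply the lower bound from the proposition (note: you write ``Corollary~2'', which is the statement being proved; you mean Corollary~1), and take the subscheme cut out by the first $n$ generators to realize equality.

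The two places where you diverge from the paper are the finishing steps. For smoothability of $\Gamma$, the paper simply invokes the standard fact that every complete intersection is smoothable and moves on; you instead build the smoothing by hand as a fibred product of one-parameter deformations of fat points on a line. Your version is self-contained but carries a hidden characteristic hypothesis (you need $1,2,\ldots,d_i+1$ to be distinct in $K$), whereas the paper's one-line citation avoids this. For the equal-exponent case $d_0=\cdots=d_n=d$, the paper observes that the degree-$(d+1)$ piece of $F^{\perp}$ is basepoint-free and uses Bertini to get a smooth complete intersection of $n$ \emph{general} forms; you instead exhibit the explicit reduced scheme $\{[1:\zeta^{a_1}:\cdots:\zeta^{a_n}]\}$ cut out by the specific forms $y_i^{d+1}-y_0^{d+1}\in F^{\perp}$. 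Your construction is pleasantly concrete and avoids Bertini, at the cost of requiring $K$ to contain a primitive $(d+1)$-th root of unity; the paper's argument is coordinate-free and in fact produces an open set of smooth apolar schemes rather than a single one.
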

\begin{proof}  When $F=x_{0}^{d_{0}}\cdot 
    x_{1}^{d_{1}}\cdot\ldots\cdot x_{n}^{d_{n}}$, then $F^{\bot}$ is 
    the complete intersection generated by the forms
    $$y_{0}^{d_{0}+1}, y_{1}^{d_{1}+1}, \ldots, y_{n}^{d_{n}+1}.$$
    So it is generated in degree $d_{n}+1$, while $F^{\bot}$ has 
    degree $$(d_{0}+1)\cdot (d_{1}+1)\cdot \ldots\cdot  (d_{n}+1).$$
    The formula for the cactus rank follows, since the first $n$ 
    generators define a finite apolar subscheme of degree 
    $(d_{0}+1)\cdot \ldots\cdot  (d_{n-1}+1).$
  Now, any complete intersection is smoothable, so the smoothable rank 
  equals the cactus rank for $F$.
    If $d_{0}=d_{n}=d$, then the forms of degree $d+1$ in 
    $F^{\bot}$ has no basepoints so, by Bertini, $n$ general forms in 
     $F^{\bot}$ of degree $(d+1)$ define a smooth finite subscheme  
     of degree $(d+1)^{n}$ in 
    $\PP(T_{1})$.\end{proof}
    
    \

\end{document}